\documentclass[12pt]{amsart}

\usepackage{amssymb,amsmath,graphicx,color,amsthm,bbm, bbold, mathtools}

\definecolor{Red}{cmyk}{0,1,1,0}

\definecolor{verde}{cmyk}{1,0,1,0}

\definecolor{azul}{cmyk}{1,1,0,0}


\evensidemargin 0in \oddsidemargin 0in
\setlength{\headheight}{0cm} \setlength{\headsep}{50pt}
\setlength{\parindent}{0cm} \setlength{\textwidth}{6.5 in}
\setlength{\parskip}{.3cm}\setlength{\textheight}{21cm}
\setlength{\topmargin}{-1cm}\setlength{\linewidth}{10cm}
\setlength{\footskip}{1.5cm}

\numberwithin{equation}{section}




\def\Ed{{\mathbb{E}}}
\def\Pd{{\mathbb{P}}}

\newcommand{\N}{\mathbb{N}}


\newcommand{\e}{\varepsilon}


\newtheorem{theorem}{Theorem}
\newtheorem{proposition}[equation]{Proposition}
\newtheorem{definition}{Definition}
\newtheorem{lemma}{Lemma}
\newtheorem{corollary}[equation]{Corollary}

\begin{document}

\title{Agglomeration in a preferential attachment random graph with edge-steps}

\author{Caio Alves$^1$}
\address{$^1$ University of Leipzig, Germany. \newline
	e-mail: \textit{caio.alves@math.uni-leipzig.de}}

\author{Rodrigo Ribeiro$^2$}
\address{$^2$ Pontificia Universidad Cat\'{o}lica de Chile, Chile.
	\newline
	e-mail: \textit{rribeiro@impa.br}}

\author{R{\'e}my Sanchis$^3$}
\address{$^3$ Universidade Federal de Minas Gerais, Brazil
	\newline
	e-mail: \textit{rsanchis@mat.ufmg.br}}

\begin{abstract}In this paper we investigate geometric properties of graphs generated by a preferential attachment random graph model with \textit{edge-steps}. More precisely, at each time~$t\in\N$, with probability $p$ a new vertex is added to the graph (a \textit{vertex-step} occurs) or with probability~$1-p$ an edge connecting two existent vertices is added (an \textit{edge-step} occurs). We prove that the \textit{global clustering coefficient} decays as $t^{-\gamma(p)}$ for a positive function $\gamma$ of $p$. We also prove that the \textit{clique number} of these graphs is, up to sub-polynomially small factors, of order~$t^{(1-p)/(2-p)}$.
\vskip.5cm
\noindent
\emph{Keywords}: random graphs; complex networks; clustering coefficients; preferential attachment; concentration bounds, transitivity, clique number.
\newline 
MSC 2010 subject classifications. Primary 05C82; Secondary  60K40, 68R10
\end{abstract}

\maketitle

\section{Introduction}
Empirical findings on properties of concrete networks have encouraged the proposal and investigation of non-homogeneous random graph models. The data obtained from complex networks coming from distinct contexts has suggested that, although different in background, those networks share many special properties such as scale-freeness and small diameter. In this paper, we are interested in the fact that such networks are highly clustered. We do not intend to survey the enormous amount of work done in the field, but the interested reader may find in \cite{CLBook, DBook,HBook} some important rigorous results about many properties of different models investigated so far and in \cite{BA99, SW98} a vast set of empirical properties found.

As an attempt of producing a scale-free graphs, in the seminal work \cite{BA99}, R. Albert and A. Barab\'{a}si proposed a dynamical random graph model in which at each step a new vertex is added with $m$ edges emanating from it. Its $m$ neighbors are then independently chosen from the previous vertex-set with probability proportional to their degrees. This rule of attachment, known as \textit{preferential attachment}, proved itself efficient to produce graphs having many of the properties observed empirically. However, this important model does not produce graphs with large cliques neither high density of triangles \cite{bollrior}. Another downside is the fact that the dynamic proposed in \cite{BA99} forbids connections between already existent vertices, i.e., given two vertices in the graph, they may be connected only at the time the later vertex is added. In terms os real-life networks, connections between older vertices are frequently natural and expected.

In this paper we study a modification of the traditional Barab\'{a}si-Albert's model (BA-model) in which connections between already existent vertices are allowed. This mechanism is present in a very general model proposed by Cooper and Frieze in \cite{CF03}, which under certain choices for the parameter also produce scale-free graphs. Roughly speaking, our aim is to investigate the impact on structures of these graphs when this special kind of connection is allowed.
\subsection{A preferential attachment dynamic with edge-steps}
The model here investigated is defined inductively. At each step we decide according to a specific rule how to obtain the new graph from the previous one. There are two ways in which we modify the graphs:
\begin{itemize}
	\item \textit{Vertex-step}: We add a new vertex $v$ to the graph $G$ and connect $v$ to a vertex~$u$ in $G$ selected according the \textit{preferential attachment rule}, i.e., $u$ is selected with probability
	\begin{equation}\label{eq:PArule}
	\Pd \left(  u \text{ is chosen}\; \middle | \;G \right) = \frac{\text{degree of } u \text{ in }G}{\text{sum of the degrees of all vertices in } G };
	\end{equation}
	\item \textit{Edge-step}: A new edge $\{u,w\}$ is added to $G$, where $u$ and $w$ are vertices in~$G$ chosen independently and also according to the preferential attachment rule above described.
\end{itemize}
We point out that, in the edge-step, the vertices $u$ and $w$ may be the same, in this case we add a \textit{loop}. Moreover, $u$ and $w$ may be already connected, in this case we allow \textit{multiple edges} in the process.

The model evolves as follows: Given a parameter $p \in [0,1]$, consider an initial graph~$G_1$ and a collection of \textit{i.i.d.} random variables~$\{ Z_t\}_{t \ge 2}$ following a Bernoulli's distribution with parameter $p$. For each integer~$t\geq 2$ we obtain $G_{t+1}$ from $G_t$ by  performing either a \textit{vertex-step} on $G_t$ if $Z_{t+1} = 1$, or an \textit{edge-step} otherwise. In this setting, we let $\mathcal{F}_t$ denote the $\sigma$-algebra encoding all our knowledge about the process up to time $t$.

We observe that, when $p = 1$, this model correspond to the BA-model with $m=1$.  Throughout the paper we let $G_1$ be the graph with one vertex and one loop attached to it. This choice is made exclusively to simplify the expressions of the probabilities we have to deal with and has no lasting impact on the graph's asymptotic structure.
\subsection{Clustering coefficient} One of the common feature of many concrete networks is clustering, i.e., tendency that ``\textit{people with common friends tend to become friends}". One way of quantifying this tendency of \textit{closing triangles} is the \textit{global clustering coefficient (or transitivity)}, $\tau(G)$, which is defined as
\begin{equation}
\tau(G) := 3 \times\frac{ \# \text{ triangles in }G}{\# \text{ paths of length 2 in }G}.
\end{equation}
The observable $\tau(G)$ measures the probability of a uniformly chosen pair of vertices that have a common neighbor being connected. 

In \cite{bollrior}, for the traditional BA-model with $m\ge 2$, the authors showed that~$\mathbb{E}[\tau(G_t)]$ decays as~$\log^2(t)/t$ and the expected number of triangles at time $t$ is of order~$\log^3(t)$. If one desires to increase the global clustering one may try to increase the number of triangles, even if this also increases the number of paths of length $2$. In this direction, it is reasonable to expect that the majority of the triangles is formed by those vertices of high degree, since all the connections following the PA-rule are made with probability proportional to the degree of the vertices. Therefore, if the probability of choosing vertices of small degree is decreased (\textit{respec. increased}), somehow one may expect a higher (\textit{respec. smaller}) number of triangles. One way of achieving this is known as the \textit{affine} preferential attachment rule, \cite{M05}. In this scheme a constant of \textit{attractiveness}, $\delta$,  is introduced in the rule (\ref{eq:PArule}) in such way that a new vertex $v$ chooses a neighbor $u$ with probability proportional to the degree of $u$ plus $\delta$.
In this setup, a positive~$\delta$ gives an ``extra" chance of a small degree vertex being chosen. In \cite{EN11} the authors showed that for any positive~$\delta$ the expected value of the number of triangles decreases and is of order~$\log^2(t)$ and ~$\mathbb{E}[\tau(G_t)]$ decays as~$\log(t)/t$. To the best of our knowledge the case for negative~$\delta$ remains open.

Another way of increasing the number of triangles was proposed by P. Holme and B. Kim in \cite{HK02}. Similar to the model investigated here, their model alternates between a \textit{PA} step and a so-called \textit{triad formation step}, which adds a new triangle whenever it is taken. Although, this \textit{triad formation} mechanism generates graphs with large amount of triangles, by itself it is not enough to achieve positive global clustering. In \cite{ORS18} the authors proved that~$\tau(G_t)$ is of order~$1/\log(t)$, \textit{w.h.p}. 

Positive global clustering is one of the features E. Jacob and P. M\"{o}rters desired to achieve in \cite{JM15}. Their model combines preferential attachment rule with spatial proximity. A new vertex $v$ is placed uniformly on the one-dimensional torus and the rule (\ref{eq:PArule}) becomes a function of the degree of $u$ and the distance between $v$ and $u$. This mechanism exhibits two regimes for the global clustering: one that $\tau(G_t)$ converges in probability to a positive constant and one where it converges to zero. In our case, we prove a stronger result than the expected value of $\tau(G_t)$. We prove a concentration inequality result for $\tau(G_t)$, which is stated below.
\begin{theorem}[Global clustering coefficient]\label{thm:clustering} For any positive $\varepsilon < 1$, there exist positive constants $C_1$ and~$C_2$, depending on $\varepsilon$ and $p$ only, such that
	\[
	\Pd \left( \frac{C_1}{t^{\gamma(p)(1+\varepsilon)}} \le \tau(G_t) \le \frac{C_2}{t^{\gamma(p)(1-\varepsilon)}}\right) = 1- o(1),
	\]
	where $\gamma$ is the positive function:
	\[
	\gamma(p) := 2-p - \frac{3(1-p)}{2-p}.
	\]
\end{theorem}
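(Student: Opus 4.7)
The plan is to decompose the global clustering coefficient as $\tau(G_t) = 3 T_t / P_2(G_t)$, where $T_t$ denotes the number of triangles in $G_t$ and $P_2(G_t) := \sum_v \binom{d_t(v)}{2}$ the number of paths of length two, and then to prove matching two-sided bounds for numerator and denominator separately, each tight up to sub-polynomial factors. The target exponent $\gamma(p) = (2-p) - 3(1-p)/(2-p)$ matches the heuristic $P_2(G_t) \asymp t^{2-p}$ and $T_t \asymp t^{3(1-p)/(2-p)}$.

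For the denominator, write $P_2(G_t) = \tfrac{1}{2}(D_t - 2t)$ with $D_t := \sum_v d_t(v)^2$, and use that $\sum_v d_t(v) = 2t$ is deterministic. A one-step expectation computation (vertex-step contributing $2 + D_t/t$ and edge-step $2 + 2 D_t/t$ in conditional expectation) yields
\[
\mathbb{E}[D_{t+1} - D_t \mid \mathcal{F}_t] = \tfrac{2-p}{t}\,D_t + 2 + O(1/t),
\]
so iterating this linear recurrence gives $\mathbb{E}[D_t] \asymp t^{2-p}$, and Markov's inequality furnishes the upper bound $P_2(G_t) \leq t^{2-p+o(1)}$ w.h.p. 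The matching lower bound does not require concentration of the whole sum: a Polya-urn representation of the evolution of a single degree shows $d_{\max}(G_t) \geq t^{(2-p)/2 - o(1)}$ w.h.p., and then $P_2(G_t) \geq \binom{d_{\max}(G_t)}{2} \geq t^{2-p - o(1)}$.

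For the numerator, the lower bound is immediate from the clique-number result advertised in the abstract: since $\omega(G_t) \geq t^{(1-p)/(2-p) - o(1)}$ w.h.p., the triangles contained in a maximum clique alone give $T_t \geq \binom{\omega(G_t)}{3} \geq t^{3(1-p)/(2-p) - o(1)}$. The upper bound is the delicate part. Only edge-steps increase $T_t$, and a direct calculation gives
\[
\mathbb{E}[\Delta T_t \mid \mathcal{F}_t] \leq \tfrac{1-p}{4 t^2} \sum_z \Bigl(\sum_{v \sim z} d_t(v)\Bigr)^2.
\]
Estimating the right-hand side by coarse Cauchy--Schwarz moment inequalities overshoots the correct exponent by a polynomial factor, so a more structural approach is needed: I would attempt an inductive argument on $t$ that simultaneously tracks $T_t$ and auxiliary functionals counting weighted two- and three-paths, feeding partial bounds on $T_t$ back into the recursion to obtain $\mathbb{E}[T_t] \leq t^{3(1-p)/(2-p) + o(1)}$. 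A final Markov step upgrades this to a high-probability upper bound on $T_t$.

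Combining the four bounds via a union bound produces $C_1 t^{-\gamma(p)(1+\e)} \leq \tau(G_t) \leq C_2 t^{-\gamma(p)(1-\e)}$ with probability $1-o(1)$, which is the asserted statement. The principal obstacle is the sharp upper bound on $\mathbb{E}[T_t]$: the naive degree-moment bounds are too weak, so the argument must exploit the self-similar recursive structure of the process, most likely via a bootstrap that couples the growth of $T_t$ to auxiliary weighted-path statistics whose expectations can be controlled inductively at the same time.
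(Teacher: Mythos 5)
Your overall decomposition ($\tau = 3T_t/P_2$, four separate bounds, union bound) is exactly the paper's strategy, and three of your four bounds are sound: the numerator lower bound via the clique of \cite{ARS17} is precisely what the paper does; the denominator upper bound via the recursion $\Ed[D_{t+1}-D_t\mid\mathcal{F}_t]=\frac{2-p}{t}D_t+2+O(1/t)$ plus Markov is a legitimate (and arguably lighter) alternative to the paper's route, which instead proves a per-vertex tail bound $d_t(i)\le C_p t^{c_p}\log t/i^{c_p}$ via Freedman's inequality and sums $\binom{d_t(i)}{2}$ over $i$. One small caveat on the denominator lower bound: because the model allows multiple edges, $\binom{d_{\max}(G_t)}{2}$ overcounts genuine paths of length two; the paper avoids this by proving separately (Lemma 2) that some vertex has $t^{c_p(1-\e)}$ \emph{distinct} neighbours, obtained by tracking only the neighbours gained through vertex-steps between times $t$ and $2t$. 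You should do the same, but this is a routine fix.

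The genuine gap is the numerator upper bound $\Ed[T_t]\le t^{3\a+o(1)}$, which you correctly identify as the delicate part but then do not prove: ``I would attempt an inductive argument\dots'' is a statement of intent, not an argument, and it is far from clear that it closes. The one-step recursion you write down involves $\sum_z\bigl(\sum_{v\sim z}d_t(v)\bigr)^2$, i.e.\ degree-weighted codegrees, and this quantity is \emph{not} controlled by $T_t$ together with any small family of scalar path-counting functionals: its dominant contribution comes from the joint neighbourhood structure of the early high-degree vertices, so a bootstrap that only feeds back scalar statistics has no mechanism to see the required cancellation. The paper's Proposition 4.1 instead works at the level of individual triples $i<j<k$ and individual time steps $s_1,s_2,s_3$: it splits triangles according to how many of the three edges arise from vertex-steps versus edge-steps, bounds each conditional probability such as $\Ed[g^{i,j}_{s+1}\mid\mathcal{F}_s]=(1-p)d_s(i)d_s(j)/2s^2$ using the sharp bound $d_s(i)\le C_p\log(t)\,s^{c_p}/i^{c_p}$ (valid up to a stopping time that fails with probability $O(t^{-10})$), and decouples the correlations between the three edge events by constructing auxiliary dominating variables $h^{i,j}_s, f^{i,j}_s$ from independent uniforms. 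It also restricts the triple sum to $ijk\ge t^{3\a}$, counting the at most $Ct^{3\a}\log^2 t$ remaining triples trivially. None of this machinery is present or replaced in your proposal, so the central estimate of the theorem is missing.
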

Regarding the edge-step dynamic, one may think that the more edge-steps we take, the more clustered the graph is. As the above theorem states, the $\tau(G_t)$ is largest when $\gamma(p)\in[0,1]$ is at its minimum. Turns out that this minimum is not achieved in $p=0$, case in which we only perform edge-steps. In fact, Theorem~\ref{thm:clustering} shows that this model presents its highest global clustering when~$p = 2 - \sqrt{3}$.
\subsection{Clique number} 
The clique number of a graph $G$ (denoted by $\omega(G)$) is defined as the number of vertices in the largest complete subgraph (clique) in $G$. Regarding the existence of cliques, in \cite{ARS17}, the authors proved that for any $\varepsilon$ the graph~$G_t$ has \textit{w.h.p} a clique of order $t^{(1-\varepsilon)(1-p)/(2-p)}$. In this paper we prove that, up to sub-polynomially small factors, this is the order of largest clique in $G_t$. More precisely we prove the following theorem
\begin{theorem}[The clique number]\label{teo:clique} For any positive $\varepsilon <1$
	\[
	\Pd \left( t^{\frac{(1-\e)(1-p)}{2-p}} \le \omega(G_t) \le t^{\frac{(1-p)}{2-p}}\log^3(t)\right) = 1 - o(1).
	\]
\end{theorem}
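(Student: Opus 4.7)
The lower bound $t^{(1-\e)(1-p)/(2-p)} \le \omega(G_t)$ is established in \cite{ARS17}, so only the upper bound remains. I will argue by a first-moment computation on the number of $k$-cliques for $k := \lceil t^{(1-p)/(2-p)} \log^3 t \rceil$, combined with a preliminary restriction of the candidate vertex set via degree concentration.

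First, I would invoke (or re-derive by the martingale techniques used elsewhere in the paper) a uniform degree bound
\[
d_t(j) \le C \left( \frac{t}{j} \right)^{1/(2-p)} (\log t)^{a}, \quad \text{for every } 1 \le j \le t,
\]
holding with probability $1-o(1)$. Since every vertex of a $k$-clique has degree at least $k-1$, this bound forces each such vertex to have birth index at most $m = O\!\left(t^{p} (\log t)^{-b}\right)$ for some $b>0$. Hence with high probability all $k$-cliques are contained in $V_m := \{v_1, \dots, v_m\}$.

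Next I would estimate $\Pd(S \text{ is a clique})$ for an arbitrary $S = \{v_{j_1}, \dots, v_{j_k}\} \subset V_m$. For each pair $i < j$ in $S$, the expected number of (multi-)edges placed between $v_i$ and $v_j$ by time $t$ is, after integrating the product of typical degrees $(t'/i)^{1/(2-p)} (t'/j)^{1/(2-p)}$ against the edge-step intensity $(1-p)/(2 t'^{2})$, of order
\[
(1-p)\, t^{p/(2-p)} (ij)^{-1/(2-p)}.
\]
A supermartingale tail estimate, of the same flavour as those used in the proof of Theorem~\ref{thm:clustering}, would then promote this expectation to an upper bound on $\Pd(\{v_i,v_j\} \in G_t)$. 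The probability that the entire set $S$ is a clique I would control by peeling the members of $S$ in reverse order of birth, conditioning at each step on the history up to the birth of $v_{j_\ell}$ and bounding the chance that enough edges between $v_{j_\ell}$ and the already-fixed older members arise; this produces a near-product bound over the $\binom{k}{2}$ pairs. Summing over the $\binom{m}{k}$ possible subsets and substituting the target $k$ should drive the expected $k$-clique count to zero, so that Markov's inequality concludes.

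The main obstacle is the second step: edge events in the PA-with-edge-steps dynamics are genuinely dependent, and in the regime $p > 1/2$ simple counting of edges inside $V_m$ already leaves room for a $k$-clique, so a naive product bound over pairs is too weak to beat the $\binom{m}{k}$ union bound. The delicate point will be to design the peeling so that the conditional probability of each new batch of edges is controlled purely by the degrees visible at that moment, while still producing the full expected-edge-count factor in the product; the polylogarithmic cushion $\log^{3} t$ inside $k$ must absorb all lower-order losses accumulated between the degree-concentration step and the clique-counting step.
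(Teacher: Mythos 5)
Your lower bound is fine, but the upper bound as written is not a proof: the step you yourself flag as ``the main obstacle'' --- a near-product bound over the $\binom{k}{2}$ pairs for the probability that a fixed $k$-set forms a clique, strong enough to beat the $\binom{m}{k}$ entropy --- is precisely the content of the argument, and it is left unestablished. The edge indicators in this model are positively correlated through the degrees, so the peeling you describe would have to be carried out over \emph{time steps} (conditioning on $\mathcal{F}_{s-1}$ and capping the degrees by a stopping time, as in the construction of the auxiliary variables $h^{i,j}_s$, $f^{i,j}_s$ in Section \ref{sec:exptriangles}) rather than over vertices; making this work simultaneously for all $\binom{k}{2}$ pairs of a $k$-set with $k$ polynomial in $t$ is a substantial piece of work that your sketch does not supply. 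A smaller but real error: the whp degree bound is $d_t(j)\le C(t/j)^{c_p}\log t$ with $c_p=1-p/2$, not exponent $1/(2-p)$; this changes the size $m$ of your candidate set and hence the entropy you must defeat.

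The intended argument is far shorter and bypasses all of this. A clique on $n$ vertices contains $\binom{n}{3}\ge cn^3$ triangles, so the event $\omega(G_t)\ge C^{1/3}t^{\alpha}\log^3 t$ forces $\mathcal{T}(G_t)\ge C t^{3\alpha}\log^9 t$. Proposition \ref{prop:exptriangle} gives $\Ed[\mathcal{T}(G_t)]\le Ct^{3\alpha}(\log t)^8$, so by Markov's inequality this event has probability at most $1/\log t=o(1)$. No degree restriction, no union bound over $k$-subsets, and no control of joint edge probabilities beyond the triple products already handled in Section \ref{sec:exptriangles} is needed; the exponent $3$ in the polylogarithmic factor $\log^3 t$ of the statement is exactly what is required to convert the $(\log t)^8$ in the triangle bound into an $o(1)$ failure probability. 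If you want to pursue your first-moment-on-cliques programme, it would in principle give sharper polylog factors, but you would need to actually prove the multi-edge correlation inequality rather than assume it.
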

The above theorem illustrates that the \textit{edge-step}, even when taken in much smaller proportion than the \textit{vertex-step}, is capable of producing robust substructures on the graphs that are not observed on the traditional BA-model and many other modifications of it. 
\subsection{Organization} In Section \ref{sec:boundsfordeg} we establish the machinery behind our main results proving useful estimates for the vertices' degree. In Section~\ref{sec:cherries} we apply the bounds obtained in the previous section to bound the number of paths of length $2$ at time $t$, which is the denominator of $\tau(G_t)$. In Section \ref{sec:exptriangles} we prove upper bounds for the number of triangles in~$G_t$. Finally, for the sake of organization of the paper, in Section \ref{sec:mainresults} we just combine the results proven in the previous sections to prove theorems \ref{thm:clustering} and \ref{teo:clique}, respectively.

\section{Bounds for the degree}\label{sec:boundsfordeg}
This section is devoted to obtaining sharp upper bounds for the vertices' degrees and to guarantee the existence of at least one vertex with very high degree. These estimates will be needed to derive an upper for the number of triangles in $G_t$ and to bound the number of cherries as well.

Since the number of vertices is random, we use the letters $i,j, k$, mostly, to express the $i$\textit{-th vertex added by the process}. In this way, $i$ will be used as \textit{an integer number} and as a vertex itself. We also let $d_t(i)$ to be the degree of $i$-th vertex at time $t$.
\subsection{Lower bound for the degree}
In this part our aim is to assure the existence of a vertex with very high degree. For this we evoke Theorem 2 of \cite{ARS17} setting there, for a fixed $\varepsilon >0$, $j=t^{\varepsilon}$ and $m$ large enough so $1-\delta_m = 1-\varepsilon$. In these settings the mentioned theorem gives us the following corollary 
\begin{corollary}[of Theorem 2 in \cite{ARS17} ] \label{cor:lowerdeg}Given $\varepsilon >0$, there exist positive constants $C_1,C_2$ and~$\delta$ depending on $\varepsilon$ and $p$ only such that
\[
\Pd \left( \exists j \in G_t, d_t(j) \ge C_1t^{c_p(1-\varepsilon)}\right) \ge 1 - C_2t^{-\delta}.
\]
\end{corollary}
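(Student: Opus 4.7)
The plan is to derive this as an immediate consequence of Theorem~2 of \cite{ARS17}, which provides a quantitative lower bound on the degree of a single, prescribed vertex in the model. That theorem says, roughly, that for a vertex born at time $j \le t$ the degree $d_t(j)$ is with high probability at least of order $(t/j)^{c_p(1-\delta_m)}$, where $c_p = (1-p)/(2-p)$ is the natural growth exponent of the dynamics and $\delta_m \to 0$ as a truncation parameter $m$ tends to infinity.

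Given $\varepsilon>0$, the recipe the authors indicate is: first choose $m = m(\varepsilon)$ large enough that $1 - \delta_m \ge 1 - \varepsilon/2$, then take the vertex $j = \lceil t^{\varepsilon/2} \rceil$. Plugging these two choices into the lower bound provided by Theorem~2 of \cite{ARS17} yields
\[
d_t(j) \;\ge\; C_1\, (t/j)^{c_p(1-\varepsilon/2)} \;\ge\; C_1\, t^{c_p(1-\varepsilon/2)^2} \;\ge\; C_1\, t^{c_p(1-\varepsilon)},
\]
with probability at least $1 - C_2 t^{-\delta}$ for positive constants depending only on $\varepsilon$ and $p$. The existential statement of the corollary follows at once, since a concrete vertex meeting the required bound has been exhibited.

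A brief preliminary check is needed to ensure that the vertex $\lceil t^{\varepsilon/2} \rceil$ actually belongs to $G_t$, i.e.\ that at least $\lceil t^{\varepsilon/2} \rceil$ vertex-steps have been performed by time $t$. Since the number of vertices at time $t$ equals $1 + \sum_{s=2}^{t} Z_s$ with $Z_s$ i.i.d.\ Bernoulli$(p)$, a standard Chernoff bound shows that this event has probability $1 - e^{-\Omega(t)}$, which is absorbed into the $t^{-\delta}$ error term without affecting the conclusion. The only real obstacle is the book-keeping between the truncation parameter $m$ of \cite{ARS17} and the target $\varepsilon$; no new probabilistic input beyond the cited theorem is required.
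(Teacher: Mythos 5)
Your proposal is correct and matches the paper's own derivation: the corollary is obtained exactly by invoking Theorem 2 of \cite{ARS17} with the vertex index $j$ equal to a small polynomial power of $t$ and the truncation parameter $m$ chosen large enough that $1-\delta_m$ absorbs the loss into $\varepsilon$, and your extra Chernoff-bound check that the vertex $\lceil t^{\varepsilon/2}\rceil$ actually exists in $G_t$ is harmless book-keeping that the paper leaves implicit. One small correction: in this paper $c_p = 1-\tfrac{p}{2}$ as in \eqref{def:cp} (the degree-growth exponent), whereas $(1-p)/(2-p)$ is the exponent $\alpha$ governing the clique number, so your parenthetical identification of $c_p$ should be fixed even though it does not affect the structure of the argument.
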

It will be useful for us to estimate how many neighbors a vertex whose degree is at least~$C_1t^{c_p(1-\varepsilon)}$ has. For this, we prove the lemma below which is essentially the statement of the above corollary for numbers of neighbors, $\Gamma_t(j)$.
\begin{lemma}\label{lemma:neigh} Given $\varepsilon >0$, there exist positive constants $C'_1, C'_2$ and $\delta'$ depending on $\varepsilon$ and $p$ only such that
	\[
	\Pd \left( \exists j \in G_t, \Gamma_t(j) \ge C'_1t^{c_p(1-\varepsilon)}\right) \ge 1 - C'_2t^{-\delta'}.
	\]
\end{lemma}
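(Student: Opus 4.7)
The plan is to bootstrap Corollary \ref{cor:lowerdeg}: since $\Gamma_t(j) \le d_t(j)$ with the deficit accounted for by repeated edges and loops, it is enough to show that a vertex of very high degree has only a negligible fraction of its incidences coming from multi-edges. Concretely, I would first apply Corollary \ref{cor:lowerdeg} with $\varepsilon/2$ in place of $\varepsilon$, producing on an event $\mathcal{A}$ of probability at least $1 - C_2 t^{-\delta}$ a vertex $j^{\ast} \in G_t$ with $d_t(j^{\ast}) \ge C_1 t^{c_p(1-\varepsilon/2)}$. Writing $R_t(j) := d_t(j) - \Gamma_t(j)$ for the multi-edge defect, the goal reduces to showing $R_t(j^{\ast}) \le \tfrac{1}{2} d_t(j^{\ast})$ with high probability, since then $\Gamma_t(j^{\ast}) \ge \tfrac{C_1}{2} t^{c_p(1-\varepsilon/2)} \ge C_1' t^{c_p(1-\varepsilon)}$ for $t$ large.

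For a \emph{fixed} label $j$, the random variable $R_t(j)$ increases at step $s+1$ only if the step attaches a new edge to $j$ whose other endpoint, chosen by the preferential attachment rule, already belongs to $\Gamma_s(j) \cup \{j\}$ (the latter case corresponding to a loop). Conditionally on $\mathcal{F}_s$ the probability of this event is bounded by a constant times $d_s(j)\cdot D_s(j)/s^2$, where $D_s(j) := \sum_{u \in \Gamma_s(j)\cup\{j\}} d_s(u)$. Using a uniform upper bound of the form $d_s(u) \le s^{c_p + o(1)}$ valid with high probability (such bounds being precisely the subject of the remaining subsection of Section \ref{sec:boundsfordeg}), together with $\Gamma_s(j) \le d_s(j)$, one bounds $D_s(j)$ by $d_s(j)\cdot s^{c_p+o(1)}$, so the per-step probability becomes $s^{c_p + o(1)} d_s(j)^2 / s^2$. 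Summation over $s \le t$ followed by Markov's inequality then yields $R_t(j) \ll t^{c_p(1-\varepsilon/2)}$ with high probability, since $c_p < 1/2$ makes the resulting exponent strictly smaller than $c_p(1-\varepsilon/2)$.

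Finally, I would upgrade from a fixed $j$ to the random $j^{\ast}$ by a union bound over the at most $t$ possible vertex labels, trading a factor of $t$ for a slight shrinkage of the exponent $\delta'$ relative to $\delta$; intersecting the resulting event with $\mathcal{A}$ concludes the proof. The main obstacle is precisely this last step: $j^{\ast}$ is a random vertex whose identity depends on the whole history, and the multi-edge estimate must hold simultaneously with the conclusion of Corollary \ref{cor:lowerdeg}. Beyond this technicality, all ingredients are routine manipulations of the preferential attachment rule combined with the uniform degree upper bounds established elsewhere in this section.
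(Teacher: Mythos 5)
There is a genuine gap at the heart of your argument: the exponent arithmetic in the bound on the multi-edge defect $R_t(j)$ relies on the claim that $c_p<1/2$, but $c_p=1-\tfrac{p}{2}\in[\tfrac12,1]$ for all $p\in[0,1]$, so that inequality never holds. Tracing through your own estimate: with $d_s(j)\le s^{c_p+o(1)}$ and $D_s(j)\le d_s(j)\, s^{c_p+o(1)}$ the per-step probability of a repeated edge is $s^{3c_p-2+o(1)}$, whose sum over $s\le t$ is $t^{3c_p-1+o(1)}$; requiring this to be $\ll t^{c_p(1-\varepsilon/2)}$ forces $3c_p-1<c_p$, i.e.\ $c_p<1/2$, which is impossible. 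In fact your bound $D_s(j)\le d_s(j)s^{c_p+o(1)}\approx s^{2c_p+o(1)}$ is weaker than the trivial bound $D_s(j)\le 2s$ whenever $c_p>1/2$, and even the trivial bound only yields $\Ed[R_t(j)]\lesssim \sum_{s\le t} d_s(j)/s \approx t^{c_p+o(1)}$ --- the \emph{same} order as $d_t(j)$ --- so no estimate of this crudeness can establish $R_t(j^\ast)\le \tfrac12 d_t(j^\ast)$. It is not even clear that this inequality is true: for a hub vertex a constant fraction of the mass $D_s(j)/(2s)$ may sit on its existing neighbors, so a constant fraction of its edge-step increments can be repeats. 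A secondary problem is the final union bound: multiplying a failure probability of order $t^{-\delta}$ (with $\delta$ possibly small) by the $t$ candidate labels cannot be absorbed by ``shrinking $\delta'$''.

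The paper avoids all multi-edge accounting by counting only neighbors acquired through \emph{vertex-steps} in the window $[t,2t]$: each vertex-step that attaches to $j$ introduces a brand-new vertex, hence automatically a distinct neighbor. Conditioned on $d_t(j)\ge C_1 t^{c_p(1-\varepsilon)}$ (from Corollary \ref{cor:lowerdeg}), the indicator that step $s+1\in[t+1,2t]$ is a vertex-step landing on $j$ has conditional probability at least $\tfrac{C_1 p}{4} t^{c_p(1-\varepsilon)-1}$, so the number of new distinct neighbors gained in that window dominates a Binomial with mean of order $t^{c_p(1-\varepsilon)}$, which concentrates exponentially. This sidesteps both the defect estimate and the union bound over labels, since the argument is run directly on the single high-degree vertex produced by the corollary. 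I would recommend abandoning the defect decomposition and adopting this route.
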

\begin{proof} By Corollary \ref{cor:lowerdeg} with probability at least $1 - C_2t^{-\delta}$ there exists in $G_t$ a vertex $j$ with degree at least $ C_1t^{c_p(1-\varepsilon)}$. We claim that the number of neighbors of $j$ at time~$2t$ that connect to~$j$ between times $t$ and $2t$ is at least $C'_1t^{c_p(1-\varepsilon)}$, \textit{w.h.p}. To see this, let $\zeta_s$ be the following random variable
	\begin{equation}
	\zeta_s = \mathbb{1}\left \lbrace \text{a vertex is added at step }s\text{ and it connects to }j \right \rbrace.
	\end{equation}
Observe that for $s\in [t+1,2t]$ we have
\begin{align*}
\Ed \left[ \zeta_{s+1} \middle | G_{s}, d_t(j) \ge C_1t^{c_p(1-\varepsilon)}\right] 
&=
\Ed \left[p\frac{d_s(j)}{2s} \middle | G_{s}, d_t(j) \ge C_1t^{c_p(1-\varepsilon)}\right] 
\\
&\ge \frac{C_1p}{4t^{\varepsilon + 2^{-1}p(1-\varepsilon)}}.
\end{align*}
Thus, the random variable $N$ defined as
\begin{equation}
	N := \sum_{s=t+1}^{2t} \zeta_s,
\end{equation}
which counts the number of neighbors $j$ has gained, between time $t$ and $2t$, only by \textit{vertex-steps} conditioned on $j$ having degree large enough dominates a binomial random variable with parameter~$t$ and $C_1 4^{-1}pt^{-\varepsilon - p2^{-1}(1-\varepsilon)}$ which, by its turn, is exponentially concentrated around its mean~$C_14^{-1}pt^{c_p(1-\varepsilon)}$. This proves the lemma. 
\end{proof}
\subsection{Upper bound for the degree} In this part we obtain a sharp upper bound for the degree of a fixed vertex $i$. Since the proof relies on the fact that the degree of a vertex properly normalized is a martingale, we define below this normalizing factor:
	\begin{equation}\label{def:phi}
	\phi(t) := \prod_{s=1}^{t-1} \left(1+\frac{c_p}{s} \right),
	\end{equation}
where $c_p$ is a function of $p$ defined as
\begin{equation}\label{def:cp}
c_p := 1 - \frac{p}{2}.
\end{equation}
A useful fact about $\phi$ is that there exists $c_1,c_2 >0$ such that $c_2t^{c_p} \le \phi_1(t) \le c_1t^{c_p}$ for all~$t$. We will need this multiple times throughout the paper.
Now we go to the proof of the main result of this section.
\begin{theorem}[Upper bound for the degree]\label{thm:upper}\label{teo:uppbounddeg} There exist positive constants $C_1$ and $C_2$, depending on $p$ only, such that for every vertex $i$ and every number $\lambda>i^{-c_p}$ the following upper bound holds
	\begin{equation}
	\label{eq:uppbounddeg}
	\Pd\left( \sup_{s \in \N}\left \lbrace \frac{d_s(i)}{\phi(s)} \right \rbrace > \frac{\lambda}{i^{c_p}} \right) \le C_1\exp\{-C_2\lambda\}.
	\end{equation}
\end{theorem}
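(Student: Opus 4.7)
The plan is to identify $M_s(i) := d_s(i)/\phi(s)$ as a non-negative martingale (for $s$ beyond the birth time of vertex $i$), and then convert this into the desired exponential tail via Doob's $L^k$ maximal inequality combined with a sharp moment bound. A direct computation, conditioning on whether step $s+1$ is a vertex-step or an edge-step, gives
\[
\mathbb{E}[d_{s+1}(i) - d_s(i) \mid \mathcal{F}_s] = p \cdot \frac{d_s(i)}{2s} + (1-p) \cdot 2 \cdot \frac{d_s(i)}{2s} = c_p\,\frac{d_s(i)}{s},
\]
where the factor $2$ in the edge-step reflects the two endpoints, each independently preferentially attached. Combined with $\phi(s+1)/\phi(s) = 1 + c_p/s$, this yields $\mathbb{E}[M_{s+1}(i) \mid \mathcal{F}_s] = M_s(i)$ exactly.

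For the exponential tail, I would establish a $k$-th moment bound for every integer $k \ge 1$. Writing $X_s := d_{s+1}(i) - d_s(i) \in \{0,1,2\}$ and expanding $d_{s+1}(i)^k = (d_s(i) + X_s)^k$ via the binomial theorem, using $\mathbb{E}[X_s \mid \mathcal{F}_s] = c_p d_s(i)/s$ and the quadratic smallness $\mathbb{P}(X_s = 2 \mid \mathcal{F}_s) = O(d_s(i)^2/s^2)$ (from independent sampling of the two endpoints of an edge-step), one obtains a recursion of the form
\[
\mathbb{E}[d_{s+1}(i)^k \mid \mathcal{F}_s] \le d_s(i)^k\!\left(1 + \frac{c_p k}{s} + \text{(lower order)}\right),
\]
which iterates (from the birth time of $i$ up to $t-1$, using $\phi(t)/\phi(i) \asymp (t/i)^{c_p}$) to $\mathbb{E}[M_t(i)^k] \le C^k k!/i^{c_p k}$ for an absolute constant $C$. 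Doob's $L^k$ maximal inequality combined with Markov then gives
\[
\mathbb{P}\!\left(\sup_{s \le t} M_s(i) > \lambda/i^{c_p}\right) \le \left(\frac{k}{k-1}\right)^k \mathbb{E}[M_t(i)^k] \cdot (i^{c_p}/\lambda)^k \le e\,(Ck/\lambda)^k,
\]
and optimizing $k$ of order $\lambda/(eC)$ yields the desired $C_1 \exp(-C_2 \lambda)$. The supremum over all $s \in \mathbb{N}$ follows by monotone convergence in $t$, and the hypothesis $\lambda > i^{-c_p}$ is exactly what allows the optimal $k$ to keep the bound non-trivial.

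The main obstacle is the moment bound with the correct dependence on $k$. The binomial expansion produces correction terms $\binom{k}{\ell} d_s(i)^{k-\ell} \mathbb{E}[X_s^\ell \mid \mathcal{F}_s]$ for $\ell \ge 2$, and a naive use of $X_s^\ell \le 2^{\ell-1} X_s$ produces factors that accumulate as $\exp(k^2/i^{c_p})$, which is too large. To keep the accumulated error at the $C^k k!$ level, one must separate the contribution $\mathbb{P}(X_s = 1) \cdot 1^\ell$ from $\mathbb{P}(X_s = 2) \cdot 2^\ell$ and exploit the quadratic smallness of the latter so that the $2^\ell$ factor is compensated by a $1/s^2$ instead of only $1/s$. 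Carrying out this bookkeeping uniformly in $k$ is the technical heart of the proof; an alternative that avoids it is to apply Freedman's inequality to the martingale stopped at the first time $M_s$ exceeds $\lambda/i^{c_p}$, using the deterministic bounds on both the increments and the predictable quadratic variation valid on this stopped event to extract the exponential tail $\exp(-\Omega(\lambda))$ directly.
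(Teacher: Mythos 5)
Your closing sentence --- stop the martingale at the first time it exceeds $\lambda/i^{c_p}$, bound its increments and its predictable quadratic variation deterministically on the stopped event, and apply Freedman's inequality --- is precisely the proof the paper gives, so that fallback is sound. The problem is with the route you actually develop. The martingale identity and the target moment bound $\Ed[M_t(i)^k]\le C^k k!/i^{c_p k}$ are the right shape, and Doob plus optimization in $k$ would finish from there; but the recursion you propose for establishing that bound does not close. Expanding $d_{s+1}(i)^k=(d_s(i)+X_s)^k$ in raw powers, the $\ell\ge 2$ terms coming from the increment-$1$ event alone contribute
\[
\frac{c_p d_s(i)}{s}\sum_{\ell=2}^{k}\binom{k}{\ell}d_s(i)^{k-\ell}
=\frac{c_p d_s(i)}{s}\Big((d_s(i)+1)^k-d_s(i)^k-k\,d_s(i)^{k-1}\Big),
\]
which for $d_s(i)=O(1)$ is of order $2^k/s$ rather than the needed $k\,d_s(i)^k/s$. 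This is not ``lower order'' precisely at the early steps $s$ comparable to $i$, where $d_s(i)$ is typically still $O(1)$; iterating it over that range produces factors like $e^{c2^k}$ and destroys the $C^kk!$ bound. Separating off $\Pd(X_s=2)=O(d_s^2(i)/s^2)$ does not help here, because this term comes from the increment-$1$ event, not the increment-$2$ one.

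The standard repair is to run the recursion on rising factorials $d_s(i)(d_s(i)+1)\cdots(d_s(i)+k-1)$ instead of powers, for which the increment-$1$ event contributes exactly a factor $1+k\,\Pd(X_s=1\mid\mathcal{F}_s)/d_s(i)$ and the problematic terms cancel identically. Even then, the increment-$2$ event (an edge-step selecting $i$ at both endpoints) leaves a residual of order $k^2/s^2$, which accumulates to $e^{k^2/i}$ over $s\ge i$ and is harmless only for $\lambda\lesssim\sqrt{i}$; controlling it for all $\lambda>i^{-c_p}$ essentially forces a stopping time on the degree, at which point you are back to the Freedman argument. So the statement is provable along your primary strategy, but the ``technical heart'' you flag is a genuine gap as written, and the one-line alternative you mention at the end is both the cleaner path and the paper's actual proof.
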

\begin{proof} To simplify our writing, we let~$\mathbb{P}_{G_{t_i}}$ be the probability measure $\Pd$ conditioned on the event where the graph~$G_t$ is some given graph~$G_{t_i}$ to which the $i$-th vertex has just been added. By Proposition 2.1 of~\cite{ARS17}, the sequence~$\{X_{s,t_i}\}_{s \ge 1}$ defined below as
	\begin{equation}\label{def:xs}
		X_{s,t_i} := \frac{d_s(i)}{\phi(s)}
	\end{equation}
is a martingale of mean $\phi^{-1}(t_i)$ with respect the natural filtration $\{\mathcal{F}_s\}_{s\ge 1}$ and the measure~$\Pd_{G_{t_i}}$. 
 In this setting, for a fixed positive number $\lambda$, let $\eta$ be the stopping time
\begin{equation}\label{def:eta}
	\eta := \inf \left \lbrace s \ge 1 ; X_{s,t_i} \ge \lambda \right \rbrace.
\end{equation}
Then, we define the following stopped martingale:
\begin{equation}
	X'_s := X_{s \wedge \eta,t_i}.
\end{equation}
Observe that the increment ($\Delta X_s := X'_{s+1} - X'_s$) of the stopped martingale satisfies, for $s\ge t_i$,
\begin{equation}\label{eq:incrx}
\begin{split}
\left | \Delta X'_s\right | = \left | \frac{d_{s+1}(i)}{\phi(s+1)} - \frac{d_{s}(i)}{\phi(s)}\right |\mathbb{1}_{\{\eta > s \}} = \left | \frac{\Delta d_{s+1}(i)}{\phi(s+1)} - \frac{c_p d_{s}(i)}{s\phi(s+1)}\right |\mathbb{1}_{\{\eta > s \}} \le \frac{C}{\phi(s+1)},
\end{split}
\end{equation}
since $d_s(i) \le 2s$ for all $s$ deterministically and
\begin{equation}
\Delta d_s(i) \le 2\mathbb{1}\{i\text{ is chosen at least once at step }s+1\}.
\end{equation}
Combining the above bound with the the second identity on (\ref{eq:incrx}), we also obtain, for~$s>t_i$,
\begin{equation}
\begin{split}
\Ed_{G_{t_i}} \left[  \left(\Delta X'_s\right)^2\middle | \mathcal{F}_{s} \right] & \le 2 \Ed_{G_{t_i}} \left[  \frac{(\Delta d_{s+1}(i))^2}{\phi^2(s+1)}\middle | \mathcal{F}_{s} \right]\mathbb{1}_{\{\eta > s \}} + \frac{2c^2_p d^2_{s}(i)}{s\phi^2(s+1)}\mathbb{1}_{\{\eta > s \}} \\
&\le
\frac{2}{\phi^2(s+1)} \Ed_{G_{t_i}} \left[ 4\cdot\mathbb{1}^2\{i\text{ is chosen at least once at step }s+1\}\middle | \mathcal{F}_{s} \right]\mathbb{1}_{\{\eta > s \}}\\
&\quad + \frac{2c^2_p d^2_{s}(i)}{s\phi^2(s+1)}\mathbb{1}_{\{\eta > s \}}
\\
& \le \left( \frac{Cd_s(i)}{s\phi^2(s+1)} + \frac{2c^2_p d^2_{s}(i)}{s^2\phi^2(s+1)} \right)\mathbb{1}_{\{\eta > s \}} \\
& \le \frac{C\lambda }{s\phi(s+1)} + \frac{4c^2_p\lambda}{s\phi(s+1)} .
\end{split}
\end{equation}
The above inequality implies that
\begin{equation}
W'_t := \sum_{s=1}^{(t-1)\wedge \eta }\Ed_{G_{t_i}} \left[  \left(\Delta X'_s\right)^2\middle | \mathcal{F}_{s} \right] \le \sum_{s=t_i}^{t-1} \frac{C \lambda }{s\phi(s+1)} \le \frac{C\lambda}{t^{c_p}_i}, \text{a. s.}
\end{equation}
Now we use Freedman's inequality \cite{F75} (or Theorem 6 in \cite{ORS18} for a more concise statement) to obtain that for any positive constant $A$
\begin{equation}\label{eq:freed}
\Pd_{G_{t_i}} \left( X'_t - \phi^{-1}(t_i) \ge A \right) \le \exp \left \lbrace - \frac{A^2}{\frac{2C\lambda}{t^{c_p}_i}+\frac{2\cdot 2\cdot A}{3t_i^{c_p}}}\right \rbrace.
\end{equation}
Now, we would like to guarantee that the stopping time $\eta$ is not too small, i.e., that the martingales $X'$ and $X$ are essentially the same. To do this observe that
\begin{equation}\label{eq:eta}
\begin{split}
	\Pd_{G_{t_i}} \left( \eta \le t \right) \le 	\Pd_{G_{t_i}} \left( \exists s \le t, X_s \ge \lambda \right) = 	 \Pd_{G_{t_i}} \left( X'_t  - \phi^{-1}(t_i) \ge  \lambda - \phi^{-1}(t_i) \right)
\end{split}
\end{equation}
and taking $A = \lambda - \phi^{-1}(t_i)$, which is positive by the hypothesis on $\lambda$, in (\ref{eq:freed}) we obtain
\begin{equation}
\begin{split}
\Pd_{G_{t_i}} \left( X'_t  - \phi^{-1}(t_i) \ge  \lambda - \phi^{-1}(t_i) \right) & \le \exp \left \lbrace - \frac{(\lambda - \phi^{-1}(t_i) )^2}{\frac{2C\lambda}{t^{c_p}_i}+\frac{4(\lambda - \phi^{-1}(t_i) )}{3t_i^{c_p}}}\right \rbrace 
 \le e^{C'}\exp \left \lbrace  -C\lambda t_i^{c_p}\right \rbrace,
\end{split}
\end{equation}
Combining the above bound with (\ref{eq:eta}) we obtain
\begin{equation}
\Pd_{G_{t_i}} \left(\eta < \infty \right) = \lim_{t \to \infty } \Pd_{G_{t_i}} \left(\eta \le t \right) \le C'\exp \left \lbrace  -C\lambda t_i^{c_p}\right \rbrace.
\end{equation}
Replacing $\lambda$ by $\lambda/i^{c_p}$ on the definition of $\eta$ in (\ref{def:eta}) we finally obtain
\begin{equation}
\Pd_{G_{t_i}} \left( \sup_{s \in \N} \left \lbrace \frac{d_s(i)}{\phi(s)}\right \rbrace \ge \frac{\lambda}{i^{c_p}}\right) = \Pd_{G_{t_i}} \left(\eta < \infty \right) \le C'\exp \left \lbrace  -\frac{C\lambda t_i^{c_p}}{i^{c_p}}\right \rbrace
\end{equation}
and since $t_i \ge i$, integrating on $G_{t_i}$ yields
\begin{equation}
\Pd \left( \sup_{s \in \N} \left \lbrace \frac{d_s(i)}{\phi(s)}\right \rbrace \ge \frac{\lambda}{i^{c_p}}\right) \le C'\exp \left \lbrace  -C\lambda\right \rbrace,
\end{equation}
which proves the theorem.
\end{proof}
\section{Concentration results for the number of cherries}\label{sec:cherries}
In this section we combine the bounds obtained in the previous section to prove concentration inequalities for $\mathcal{C}(G_t)$, i.e., the number of paths of length two or simply \textit{cherries}. Our aim is to prove the theorem below
\begin{theorem}[Concentration for cherries]\label{teo:cerejas} Given $\varepsilon$, there exist positive constants $C_1, C_2, C_3$ and $\delta$, depending on $\varepsilon$ and $p$ only, such that 
\[
\Pd \left( C_1t^{(2-p)(1-\varepsilon)} \le \mathcal{C}(G_t) \le C_2t^{(2-p)}\log^2t \right) \ge 1- C_3t^{-\delta}.
\]
\end{theorem}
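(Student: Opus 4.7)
The plan is to reduce both bounds to the degree estimates already established in Section \ref{sec:boundsfordeg}. A cherry (path of length 2) is naturally indexed by its middle vertex: up to a bounded correction coming from loops/multi-edges, the number of cherries centred at vertex $j$ equals $\binom{d_t(j)}{2}$. Hence
\[
\mathcal{C}(G_t) \;=\; \sum_{j\le N_t}\binom{d_t(j)}{2} \;\asymp\; \sum_{j\le N_t} d_t(j)^2,
\]
and the problem becomes that of controlling the empirical second moment of the degree sequence from above and below.

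For the upper bound I would apply Theorem \ref{teo:uppbounddeg} with $\lambda = K\log t$ for a constant $K$ chosen large enough. The tail bound $C_1 e^{-C_2 K\log t} = O(t^{-C_2 K})$ is polynomial, so a union bound over the at most $t$ vertices present at time $t$ gives, with probability $1-O(t^{-\delta})$,
\[
d_t(i) \;\le\; K\log t \cdot \frac{\phi(t)}{i^{c_p}} \;\le\; C\log t \cdot \frac{t^{c_p}}{i^{c_p}}\qquad \text{for every }i\le N_t,
\]
where we used $\phi(t)\le c_1 t^{c_p}$. Squaring and summing,
\[
\mathcal{C}(G_t)\;\le\; C(\log t)^2 t^{2c_p}\sum_{i=1}^{t} i^{-2c_p}\;\le\; C'(\log t)^2 t^{2-p},
\]
where the series is summable because $2c_p=2-p>1$ (assuming $p<1$; the degenerate case $p=1$ corresponds to the pure BA-model and is treated separately).

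For the lower bound the work is already done by Lemma \ref{lemma:neigh}: with probability $1-O(t^{-\delta'})$ there exists a vertex $j^\ast$ with $\Gamma_t(j^\ast)\ge C'_1 t^{c_p(1-\varepsilon)}$ distinct neighbours. Each unordered pair of distinct neighbours of $j^\ast$ yields a distinct cherry centred at $j^\ast$, so
\[
\mathcal{C}(G_t)\;\ge\;\binom{\Gamma_t(j^\ast)}{2}\;\ge\; C''\,t^{2c_p(1-\varepsilon)}\;=\;C''\,t^{(2-p)(1-\varepsilon)},
\]
which is exactly the required lower bound. Intersecting the two high-probability events and adjusting constants completes the proof.

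The main technical obstacle is the union bound in the upper argument: the estimate from Theorem \ref{teo:uppbounddeg} is only useful for $\lambda>i^{-c_p}$, and its tail must beat a union bound over all $i\le t$ while still leaving room for the $\log^2 t$ factor to appear only once in $\mathcal{C}(G_t)$. Both concerns are handled by choosing $\lambda=K\log t$ with $K$ large enough so that $C_2 K$ exceeds the desired $\delta$. The lower bound is essentially a corollary of Lemma \ref{lemma:neigh} and poses no extra difficulty.
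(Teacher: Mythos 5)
Your proposal is correct and follows essentially the same route as the paper: the upper bound via $\mathcal{C}(G_t)\le\sum_v\binom{d_t(v)}{2}$, Theorem \ref{teo:uppbounddeg} with $\lambda$ of order $\log t$ and a union bound, then summing $i^{-(2-p)}$; the lower bound directly from the high-degree vertex of Lemma \ref{lemma:neigh}. Your remark about the borderline case $p=1$ (where $\sum i^{-(2-p)}$ contributes an extra $\log t$) is a fair observation that the paper glosses over, but it does not affect the regime of interest.
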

\begin{proof} The lower bound follows immediately from Lemma~\ref{lemma:neigh}. Observe that from that lemma we have with probability at least $1-C_1't^{-\delta}$ a vertex $j$ having at least $C_2't^{(1-p/2)(1-\varepsilon)}$ neighbors. So, the cherries coming from $j$ are already of order $t^{(2-p)(1-\varepsilon)}$.
	On the other hand, observe that
	\begin{equation}\label{eq:upct}
	\mathcal{C}(G_t) \le \sum_{v \in G_t} \binom{d_t(v)}{2}.
	\end{equation}
We use the following definition:
\begin{definition}
	\label{def:Cp}
	Let~$C_p>0$ be such that the right hand side of~\eqref{eq:uppbounddeg} is smaller than~$t^{-10}$ if~$\lambda$ is chosen as~$C_p \log t$.
\end{definition} 
Then, using Theorem~$\ref{teo:uppbounddeg}$ and a union bound, we obtain
\begin{equation}
\Pd \left( \bigcup_{i \in G_t} \left \lbrace d_t(i) \ge C_p\frac{t^{c_p}\log(t)}{i^{c_p}}\right \rbrace \right) \le C't^{-9}.
\end{equation}
Thus, with probability at least $1-t^{-9}$ we have
\begin{equation}
\begin{split}
\mathcal{C}(G_t) \le \sum_{v \in G_t} \binom{d_t(v)}{2} \le C_p\sum_{i=1}^t\frac{t^{2-p}\log^2(t)}{i^{2-p}} \le Ct^{2-p}\log^2(t),
\end{split}
\end{equation}
this completes the proof.
\end{proof}

\section{The expected number of triangles}\label{sec:exptriangles}

In this section we prove an upper bound for the expected number of triangles (counted without multiplicities), denoted by $\mathcal{T}(G_t)$, in $G_t$.
If we let $\mathrm{edg}_t(i,j)$ be the integer \textit{r.v.} which counts the \textit{number of edges} connecting vertices $i$ and $j$ at time $t$, the number of triangles in $G_t$ may be written as
\begin{equation}
	\mathcal{T}(G_t) = \sum_{1\le i<j<k\le t} \mathbb{1}\{ \mathrm{edg}_t(i,j) \mathrm{edg}_t(i,k)\mathrm{edg}_t(j,k) \ge 1 \}.
\end{equation}
By the above identity, estimating the expected value of $\mathcal{T}(G_t)$ is the same that estimating the probability of the product $\mathrm{edg}_t(i,j) \mathrm{edg}_t(i,k)\mathrm{edg}_t(j,k)$ be at least $1$, which in turns is bounded from above by the expected value of the same product of \textit{r.v}'s. Turns out that, for suitable choices of $i,j$ and $k$, this bound is good enough, as we will see latter.

From the perspective of the above observation, this section is essentially devoted to bound the expected value of a product of correlated \textit{r.v}'s. To overcome the issue of correlation, we construct, for each triple of vertices $i,j$ and $k$, negatively correlated random variables which dominate the random variables~$\mathrm{edg}_t(i,j),  \mathrm{edg}_t(i,k)$ and $\mathrm{edg}_t(j,k)$. The result we would like to prove is stated in the proposition below.
\begin{proposition}
	\label{prop:exptriangle}
Using the notation above defined, there exists a positive constant $C$, depending on $p$ only, such that
\[
\Ed[\mathcal{T}(G_t)]\leq  C t^{3\alpha}(\log t)^8,
\]
where $\alpha$ is the function of $p$ defined as
\begin{equation}\label{def:alpha}
\alpha := \frac{1-p}{2-p}.
\end{equation}
\end{proposition}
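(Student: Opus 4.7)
The strategy is to bound $\Pd[\{i,j,k\} \text{ form a triangle}]\le \min\bigl\{\Ed[\mathrm{edg}_t(i,j)\,\mathrm{edg}_t(i,k)\,\mathrm{edg}_t(j,k)],\,1\bigr\}$ and to split the sum $\sum_{1\le i<j<k\le t}$ at the critical value of the product $ijk$ where the two bounds coincide. I would first fix the high-probability ``good event'' $A_t$ on which every vertex $v$ satisfies $d_s(v)\le C_p s^{c_p}\log(s)/v^{c_p}$ uniformly for every $s\in\{v,\dots,t\}$. By Theorem~\ref{teo:uppbounddeg} with $\lambda=C_p\log t$ (as in Definition~\ref{def:Cp}) and a union bound, $\Pd(A_t^c)=O(t^{-8})$; since $\mathrm{edg}_t(\cdot,\cdot)\le 2t$ deterministically, the contribution of $A_t^c$ to $\Ed[\mathcal T(G_t)]$ is at most $t^{3}\cdot t^{-8}=t^{-5}$, which is negligible, and all estimates below proceed on $A_t$.

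The crucial analytic input is a product-of-expectations bound. On $A_t$, the conditional probability of creating the edge $\{u,v\}$ at step $s$ is bounded by the deterministic quantity $q_s(u,v):=C\,s^{-p}(\log s)^2/(uv)^{c_p}$. For each triple $\{i,j,k\}$ I would construct, at each step $s$, Bernoulli variables $\xi_s^{(e)}$ for $e\in\{ij,ik,jk\}$, with $\Ed[\xi_s^{(e)}]\le q_s(e)$, coupled so that on $A_t$ the sum $\sum_s \xi_s^{(e)}$ stochastically dominates $\mathrm{edg}_t(e)$. Within a single edge step at most one edge is produced, so the three $\xi_s^{(e)}$ can be taken mutually exclusive and hence negatively correlated; an external source of randomness renders them independent across distinct $s$. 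Combining negative correlation within each step with independence across steps gives
\[
\Ed\bigl[\mathrm{edg}_t(i,j)\,\mathrm{edg}_t(i,k)\,\mathrm{edg}_t(j,k)\,\mathbb{1}_{A_t}\bigr]\le \prod_{e}\sum_{s=1}^{t}q_s(e)\le \frac{C\, t^{3(1-p)}(\log t)^6}{(ijk)^{2c_p}}.
\]

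Finally, applying $\Pd[\text{triangle}]\le \min\{\text{above bound},\,1\}$, I split at $M:=t^{3\a}(\log t)^{3/c_p}$, the value at which the two bounds coincide. Triples with $ijk\le M$ contribute at most their cardinality, which by a standard divisor estimate is $O(M(\log t)^2)=O(t^{3\a}(\log t)^{O(1)})$. Triples with $ijk>M$ are handled by the product bound; the change of variables $(i,j,k)=(e^u,e^v,e^w)$ converts the resulting sum into an integral of $e^{-(1-p)(u+v+w)}$ over $\{u+v+w>\log M,\ 0\le u,v,w\le \log t\}$, dominated by the boundary $u+v+w\sim\log M$ and thus of order $t^{3(1-p)}(\log t)^{O(1)}\cdot M^{-(1-p)}=t^{3\a}(\log t)^{O(1)}$. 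The algebraic identity $3(1-p)(1-\a)=3\a$, equivalent to $1-\a=1/(2-p)$, is exactly what makes this threshold split yield the claimed exponent, and a careful accounting of log powers delivers the stated $(\log t)^8$. The main obstacle is the factorization step: the true edge counts $\mathrm{edg}_t(e)$ are strongly positively correlated through the shared degree process, and only by replacing the random conditional probabilities by the deterministic ceilings $q_s$ on $A_t$ does the mutual-exclusivity at a single edge step deliver the clean product bound.
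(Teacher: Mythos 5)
Your overall architecture coincides with the paper's: a good event on which all degrees obey $d_s(v)\le C_p s^{c_p}\log(t)/v^{c_p}$, auxiliary Bernoulli variables topped up by external randomness so that triple products factorize (with mutual exclusivity killing the coinciding-time terms), and a split of the sum over triples at $ijk\approx t^{3\alpha}$, whose arithmetic $3(1-p)(1-\alpha)=3\alpha$ you correctly identify. The contribution of the complement of the good event and the final integral estimates are also handled as in the paper.

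However, there is a genuine gap in the factorization step: your deterministic per-step ceiling $q_s(u,v)=Cs^{-p}(\log s)^2/(uv)^{c_p}$ only dominates the conditional probability of creating $\{u,v\}$ by an \emph{edge-step}, namely $(1-p)d_s(u)d_s(v)/(2s^2)$. An edge $\{i,j\}$ with $i<j$ can also be created by the \emph{vertex-step} that introduces $j$, and at that single time $s=T_j$ the conditional attachment probability is $p\,d_s(i)/(2s)\le C\log(t)/(i^{c_p}s^{p/2})$ on the good event; comparing with $q_{T_j}(i,j)$ and using $T_j\asymp j$, this exceeds your ceiling by a factor of order $j/\log^2 t$. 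Hence the coupled variables $\xi^{(e)}_s$ with means $q_s(e)$ cannot stochastically dominate $\mathrm{edg}_t(e)$, and the product bound is unjustified for every triangle containing at least one vertex-step edge. (It is true that the \emph{time-summed} vertex-step probability $\sim\log(t)/(i^{c_p}j^{p/2})$ is dominated by $\sum_s q_s(i,j)$, but that observation alone does not restore the decorrelation your argument needs, since the factorization is derived step by step.) The paper resolves exactly this point by splitting each edge indicator into an edge-step part $g^{i,j}_s$ and a vertex-step part $e^{i,j}_s$, assigning the latter the separate ceiling $q^{i,j}_s$ of \eqref{eq:qijdef} built from $\Pd(T_j=s)$, proving the mixed product bounds \eqref{eq:ghefdom}, and enumerating triangles by the number of vertex-step edges as in \eqref{eq:t2subsets}, using in addition the structural facts that no triangle has three vertex-step edges and that the youngest vertex $k$ cannot attach to both $i$ and $j$ by vertex-steps. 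Your proof needs an analogous case analysis (or an honest coupling that covers the vertex-step mechanism) to be complete.
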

\begin{proof}
Throughout this proof we assume the vertices~$i,j,k$ to be such that~$0<i<j<k\leq t$. For each~$s\in\N$ we consider the variables
\begin{align}
\label{eq:edgeind}
g^{i,j}_s&:=\{\text{an edge is added between }i\text{ and }j\text{ at time } s \text{ by an edge-step }  \},
\\ \nonumber
e^{i,j}_s&:=\left\{
\text{an edge is added between }i\text{ and }j\text{ at time } s \text{ by a vertex-step}
\right\},
\end{align}
and analogously define variables for the pair~$(j,k)$ and~$(i,k)$. As said before, our first goal in this section is, in a sense, to control the covariance between these random variables for suitable values of~$i,j,k,s$. Observe that, given our knowledge of $G_s$, the random variable $g_{s+1}^{i,j}$ is one if we perform an \textit{edge-step} on $G_s$ and choose $i$ and $j$ to be the tips of the new edge added. From this we deduce that
\begin{equation}
\Ed \left[ g_{s+1}^{i,j} \middle | \mathcal{F}_s\right] = (1-p)\frac{d_s(i)d_s(j)}{2s^2}.
\end{equation}
Arguing similarly for $e_{s+1}^{i,j}$, we also obtain that
\begin{equation}
\Ed \left[ e_{s+1}^{i,j} \middle | \mathcal{F}_s\right] = p\frac{d_s(i)}{2s},
\end{equation}
since we have to add $j$ at time $s$ and choose $i$ to connect it to. From the two above identities we see that we may control the probability of two vertices being connected at a fixed time by upper bounds on their degrees. 
Let~$(U_s)_{s\geq 0}$ be an i.i.d.\ sequence of random variables with uniform distribution over~$[0,1]$, this sequence being independent from the whole process~$(G_t)_{t\geq 1}$. Let~$T_n$ denote the \textit{time when the~$n$-th vertex is added to the graph}. We also denote
\begin{equation}
\label{eq:pijdef}
p^{i,j}_s :=  C_p^2\frac{\log^2(t)}{i^{c_p}j^{c_p}s^{p}} \wedge 1,\quad p^{i,k}_s :=  C_p^2\frac{\log^2(t)}{i^{c_p}k^{c_p}s^{p}}\wedge 1, \quad p^{j,k}_s :=  C_p^2\frac{\log^2(t)}{j^{c_p}k^{c_p}s^{p}}\wedge 1,
\end{equation}
\begin{equation}
\label{eq:qijdef}
q^{i,j}_s :=  C_p\frac{(\log t) \Pd(T_j=s)}{i^{c_p}s^{\frac{p}{2}}}\wedge 1,\,
q^{i,k}_s :=  C_p\frac{(\log t) \Pd(T_k=s)}{i^{c_p}s^{\frac{p}{2}}}\wedge 1, \, 
q^{j,k}_s :=  C_p\frac{(\log t) \Pd(T_k=s)}{j^{c_p}s^{\frac{p}{2}}}\wedge 1,
\end{equation}
where~$C_p$ is the constant from Definition~$\ref{def:Cp}$. For each~$s\in\N$, we now define the random variables
\[
h^{i,j}_s:=g^{i,j}_s + (1-g^{i,j}_s)\frac{\mathbb{1}\{U_s \leq p^{i,j}_s - \Ed[ g^{i,j}_s \vert \mathcal{F}_{s-1} ] \}}{1- \Ed[ g^{i,j}_s \vert \mathcal{F}_{s-1} ]   }.
\]
Note that the above variable is well-defined, since~$ \Ed[ g^{i,j}_s \vert \mathcal{F}_{s-1} ]<1-p< 1$. We analogously define~$h^{j,k}_s$ and~$h^{i,k}_s$. Notice that if~$\Ed[ g^{i,j}_s \vert \mathcal{F}_{s-1} ]$ were smaller than~$p^{i,j}_s$ almost surely, then the expectation of~$h^{i,j}_s$ would actually be~$p^{i,j}_s$ (see~\eqref{eq:hexpec} below). We will see that this is not the case, though it is true asymptotically almost surely. By constructing~$h^{i,j}_s$ we are actually giving an ``extra chance'' of success on top of the Bernoulli variable~$g^{i,j}_s$. We do so by completing the missing probability using an independent randomness source, the intended effect is to simplify the dependence between the variables. With a similar goal in mind, we define the random variables
\[
f^{i,j}_s:=e^{i,j}_s + (1-e^{i,j}_s)\frac{\mathbb{1}\{U_s \leq q^{i,j}_s - \Ed[ e^{i,j}_s \vert \mathcal{F}_{s-1} ] \}}{1- \Ed[ e^{i,j}_s \vert \mathcal{F}_{s-1} ]   }.
\]
We also analogously define~$f^{j,k}_s$ and~$f^{i,k}_s$. 

We let
\[
\eta_{i}:=\inf_{s \in \N}\left\{d_s(i)    \geq C_p \log (t) \frac{s^{c_p}}{i^{c_p}}     \right\},
\]
again analogously defining~$\eta_j$ and~$\eta_k$. We also let
\[
\tilde{\eta}:= \eta_i \wedge \eta_j \wedge \eta_k,
\]
which is the first time the degree of one of the vertices $i,j$ or $k$ has unexpectedly increased too much.
We then note that, for~$s\leq t$,
\begin{align}
\nonumber
\lefteqn{\Ed\big[h^{i,j}_s \mathbb{1}\{\tilde{\eta}>s-1  \}  \vert \mathcal{F}_{s-1} \big]}\quad
\\
\label{eq:hexpec}
&=
\mathbb{1}\{\tilde{\eta}>s-1  \}\left(         
\Ed[ g^{i,j}_s \vert \mathcal{F}_{s-1} ]
+
\big(1-\Ed[ g^{i,j}_s \vert \mathcal{F}_{s-1} ] \big) \frac{\Ed\big[\mathbb{1}\{U_s \leq p^{i,j}_s - \Ed[ g^{i,j}_s \vert \mathcal{F}_{s-1} ]\}\big]}{1- \Ed[ g^{i,j}_s \vert \mathcal{F}_{s-1} ]   }
\right)
\\
\nonumber
&=
\mathbb{1}\{ \tilde{\eta}>s-1  \}\left(         
2(1-p)\frac{ d_{s-1}(i)d_{s-1}(j)}{4(s-1)^2}
+
\left(
p^{i,j}_s -2(1-p)\frac{ d_{s-1}(i)d_{s-1}(j)}{4(s-1)^2}
\right) 
\vee 0
\right)
\\
\nonumber
&=
\mathbb{1}\{ \tilde{\eta}>s-1  \}
p^{i,j}_s,
\end{align}
and we can obtain analogous identities for~$h^{j,k}_s$ and~$h^{i,k}_s$. Now, if~$s_1,s_2,s_3,t\in\N$ are such that~$s_1<s_2<s_3<t$, we obtain
\begin{align}
\Ed\big[h^{i,j}_{s_1}h^{j,k}_{s_2}h^{i,k}_{s_3} \mathbb{1}\{\tilde{\eta}>t  \}  \big]
\label{eq:hprod}
&=
\Ed\left[  \Ed\big[h^{i,j}_{s_1}h^{j,k}_{s_2}h^{i,k}_{s_3}\mathbb{1}\{\tilde{\eta}>t  \}  \vert \mathcal{F}_{s_3-1} \big]\right]
\\
\nonumber
&\leq
\Ed\left[  \mathbb{1}\{\tilde{\eta}>s_3-1  \} h^{i,j}_{s_1}h^{j,k}_{s_2}\Ed\big[h^{i,k}_{s_3}  \vert \mathcal{F}_{s_3-1} \big]\right]
\\
\nonumber
&\leq
p^{i,k}_{s_3}\Ed\left[  \mathbb{1}\{\tilde{\eta}>s_2 -1  \} h^{i,j}_{s_1}h^{j,k}_{s_2}\right]
\\
\nonumber
&\leq
p^{i,j}_{s_1}p^{j,k}_{s_2}p^{i,k}_{s_3},
\end{align}
and the same can be proved analogously for any of the~$5$ other different orderings of~$s_1,s_2,s_3$. Similarly, we also obtain
\begin{align}
\label{eq:fexpec}
\Ed\big[f^{i,j}_s \mathbb{1}\{\tilde{\eta}>s-1  \}  \vert \mathcal{F}_{s-1} \big]
=
\mathbb{1}\{ \tilde{\eta}>s-1  \}
q^{i,j}_s,
\end{align}
and
\begin{align}
\Ed\big[f^{i,j}_{s_1}h^{j,k}_{s_2}h^{i,k}_{s_3} \mathbb{1}\{\tilde{\eta}>t  \}  \big]
\label{eq:hfprod}
&\leq
q^{i,j}_{s_1}p^{j,k}_{s_2}p^{i,k}_{s_3},\quad
\Ed\big[h^{i,j}_{s_1}f^{j,k}_{s_2}f^{i,k}_{s_3} \mathbb{1}\{\tilde{\eta}>t  \}  \big]
\leq
p^{i,j}_{s_1}q^{j,k}_{s_2}q^{i,k}_{s_3},
\\ \nonumber
\Ed\big[h^{i,j}_{s_1}f^{j,k}_{s_2}h^{i,k}_{s_3} \mathbb{1}\{\tilde{\eta}>t  \}  \big]
&\leq
p^{i,j}_{s_1}q^{j,k}_{s_2}p^{i,k}_{s_3},\quad
\Ed\big[f^{i,j}_{s_1}h^{j,k}_{s_2}f^{i,k}_{s_3}  \mathbb{1}\{\tilde{\eta}>t  \}  \big]
\leq
q^{i,j}_{s_1}p^{j,k}_{s_2}q^{i,k}_{s_3},
\\ \nonumber
\Ed\big[h^{i,j}_{s_1}h^{j,k}_{s_2}f^{i,k}_{s_3}  \mathbb{1}\{\tilde{\eta}>t  \}  \big]
&\leq
p^{i,j}_{s_1}p^{j,k}_{s_2}q^{i,k}_{s_3},\quad
\Ed\big[f^{i,j}_{s_1}f^{j,k}_{s_2}h^{i,k}_{s_3}  \mathbb{1}\{\tilde{\eta}>t  \}  \big]
\leq
q^{i,j}_{s_1}q^{j,k}_{s_2}p^{i,k}_{s_3},
\end{align}

We note that if~$s_1=s_2$ for example, then
\[
\Ed[e^{i,j}_{s_1}e^{j,k}_{s_1}g^{i,k}_{s_3}]=\Ed[e^{i,j}_{s_1}g^{j,k}_{s_1}g^{i,k}_{s_3}]=\Ed[g^{i,j}_{s_1}g^{j,k}_{s_1}g^{i,k}_{s_3}]=0,
\]
since it is not possible to put edges between different pairs of vertices at the same time step, and that the expectation of any other triple product of these types of variables is~$0$ if any of the times steps $s_1,s_2,s_3$ coincide. We then obtain, for any triplet of numbers~$s_1,s_2,s_3\leq t$,
\begin{align}
\label{eq:ghdom}
\Ed\left[g_{s_1}^{i,j}g_{s_2}^{i,k} g_{s_3}^{j,k}          \right] 
&\leq 
\Ed\left[ g_{s_1}^{i,j} g_{s_2}^{i,k} g_{s_3}^{j,k} \mathbb{1}\{ \tilde \eta> t    \}             \right]
+
\Pd (\tilde \eta\leq t )
\\
\nonumber
&\leq 
p^{i,j}_{s_1}p^{j,k}_{s_2}p^{i,k}_{s_3}
+
\Pd (\tilde \eta\leq t )
\\
\nonumber
&\leq
(C_p \log (t))^6 \frac{1}{(ijk)^{2-p}(s_2 s_2 s_3)^p}
+
\Pd (\tilde \eta\leq t ).
\end{align}
Again, we similarly obtain
\begin{align}
\label{eq:ghefdom}
\Ed\left[e_{s_1}^{i,j}g_{s_2}^{i,k} g_{s_3}^{j,k}          \right] 
&\leq
C_p^5(\log t)^5 \frac{1}{(ik)^{2-p}j^{c_p}}\frac{1}{(s_2 s_3)^{p}}\frac{\Pd(T_j=s_1)}{s_1^{p/2}}
+
\Pd (\tilde \eta\leq t ),
\\ \nonumber
\Ed\left[g_{s_1}^{i,j}e_{s_2}^{i,k} g_{s_3}^{j,k}          \right] 
&\leq
C_p^5 (\log t)^5 \frac{1}{(ij)^{2-p}k^{c_p}}\frac{1}{(s_1 s_3)^{p}}\frac{\Pd(T_k=s_2)}{s_2^{p/2}}
+
\Pd (\tilde \eta\leq t ),
\\ \nonumber
\Ed\left[g_{s_1}^{i,j}g_{s_2}^{i,k} e_{s_3}^{j,k}          \right] 
&\leq
C_p^5 (\log t)^5 \frac{1}{(ij)^{2-p}k^{c_p}}\frac{1}{(s_1 s_2)^{p}}\frac{\Pd(T_k=s_3)}{s_3^{p/2}}
+
\Pd (\tilde \eta\leq t ),
\\ \nonumber
\Ed\left[e_{s_1}^{i,j}g_{s_2}^{i,k} e_{s_3}^{j,k}          \right] 
&\leq
C_p^4 (\log t)^4 \frac{1}{(ik)^{c_p}s_2^{p}}\frac{\Pd(T_j=s_1)}{i^{c_p} s_1^{p/2}}\frac{\Pd(T_k=s_3)}{j^{c_p} s_3^{p/2}}
+
\Pd (\tilde \eta\leq t ),
\\ \nonumber
\Ed\left[e_{s_1}^{i,j}e_{s_2}^{i,k} g_{s_3}^{j,k}          \right] 
&\leq
C_p^4 (\log t)^4 \frac{1}{(jk)^{c_p}s_3^{p}}\frac{\Pd(T_j=s_1)}{i^{c_p} s_1^{p/2}}\frac{\Pd(T_k=s_2)}{i^{c_p} s_2^{p/2}}
+
\Pd (\tilde \eta\leq t ),
\end{align}
 We observe that, since $i<j<k$, it is impossible for~$k$ to be connected to both~$i$ and~$j$ via vertex-steps, and therefore~$\Ed[g^{i,j}_{s_1}e^{j,k}_{s_2}e^{i,k}_{s_3}]=0$.

Now we use the above inequalities in order to bound~$\Ed[\mathcal{T}(G_t)]$ from above. We recall that~$\mathcal{T}(G_t)$ counts the number of triangles in~$G_t$ disregarding the multiplicity of edges. Therefore, in order for the above discussion to be useful, it will be important to estimate the number of triangles formed by earlier vertices (which usually have high degree, which corresponds to a high multiplicity of edges) and triangles formed by later vertices separately. We let
\begin{align}
\label{eq:t1t2def}
\mathcal{T}_1(G_t)&:=\#\big\{\{i,j,k\}\subset\N ; i\cdot j \cdot k \leq t^{3\alpha}   \big\},
\\
\nonumber
\mathcal{T}_2(G_t)&:=\#
\left\{
\begin{array}{c}
\{i,j,k\}\subset\N ; i\cdot j \cdot k \geq t^{3\alpha};i<j<k;
\\ \text{ and the vertices } i,j,k \text{ form a triangle in }G_t    
\end{array}
\right\}.
\end{align}
We then have
\begin{equation*}
\mathcal{T}(G_t)\leq \mathcal{T}_1(G_t) + \mathcal{T}_2(G_t).
\end{equation*}
Now, $\mathcal{T}_1(G_t)$ can be estimated in an elementary way:
\begin{equation}\label{eq:t1bound}
\begin{split}
\mathcal{T}_1(G_t)
&\leq
\sum_{i=1}^{t^{3\alpha}}\sum_{j=1}^{\frac{t^{3\alpha}}{i}}\sum_{k=1}^{\frac{t^{3\alpha}}{ij}} 1 \leq
Ct^{3\alpha}(\log t)^2.
\end{split}
\end{equation}
But~$\mathcal{T}_2(G_t)$ is more complicated. We have to break it into three distinct sets:
\begin{align}
\label{eq:t2subsets}
\mathcal{T}_2^0(G_t)&:=\#
\left\{
\begin{array}{c}
\{i,j,k\}\subset\N ; i\cdot j \cdot k \geq t^{3\alpha};i<j<k;
\\ \text{ and the vertices } i,j,k \text{ form a triangle in }
\\ G_t    \text{ with all edges coming from edge-steps}
\end{array}
\right\},
\\ \nonumber
\mathcal{T}_2^1(G_t)&:=\#
\left\{
\begin{array}{c}
\{i,j,k\}\subset\N ; i\cdot j \cdot k \geq t^{3\alpha};i<j<k;
\\ \text{ and the vertices } i,j,k \text{ form a triangle in }G_t  \text{ with two edges}  
\\ \text{ coming from edge-steps and one from a vertex-step}
\end{array}
\right\},
\\ \nonumber
\mathcal{T}_2^2(G_t)&:=\#
\left\{
\begin{array}{c}
\{i,j,k\}\subset\N ; i\cdot j \cdot k \geq t^{3\alpha};i<j<k;
\\ \text{ and the vertices } i,j,k \text{ form a triangle in }G_t    \text{ with one edge}
\\ \text{ coming from an edge-step and two from vertex-steps}
\end{array}
\right\}.
\end{align}
Note that it is impossible for a triangle to be formed by three edges coming from vertex-steps. Therefore,
\[
\mathcal{T}_2(G_t) \leq \mathcal{T}_2^0(G_t)+\mathcal{T}_2^1(G_t)+\mathcal{T}_2^2(G_t).
\]
We bound the expectations of the variables in the right hand side of the above inequality separately. First, recalling that~$\alpha=\alpha(p)=(1-p)(2-p)^{-1}$ and bounding the summand by the integral, we have,
\begin{align}
\nonumber
\Ed[\mathcal{T}_2^0(G_t)]
&\leq
\Ed\left[        \sum_{i=1}^{t}\sum_{j=\frac{t^{3\alpha}}{i}}^{t}\sum_{k=\frac{t^{3\alpha}}{ij}}^{t}    \sum_{s_1=i}^{t}    \sum_{s_2=j}^{t} \sum_{s_3=k}^{t}
g^{i,j}_{s_1}g^{j,k}_{s_2}g^{i,k}_{s_3} \right]
\\ \label{eq:t20bound}
&\stackrel{\eqref{eq:ghdom}}{\leq}
\sum_{i=1}^{t}\sum_{j=\frac{t^{3\alpha}}{i}}^{t}\sum_{k=\frac{t^{3\alpha}}{ij}}^{t}   \sum_{s_1=1}^{t}    \sum_{s_2=1}^{t} \sum_{s_3=1}^{t}
\left((C_p \log (t))^6 \frac{1}{(ijk)^{2-p}(s_2 s_2 s_3)^p}
+
\Pd (\tilde \eta\leq t ) \right)
\\
\nonumber
&\stackrel{\eqref{eq:uppbounddeg}}{\leq}
C t^{3(1-p)}(\log t)^6\left(
\sum_{i= 1}^{t}\sum_{j=\frac{t^{3\alpha}}{i}}^{t}\sum_{k\geq\frac{t^{3\alpha}}{ij}}
\frac{1}{(ijk)^{2-p}}
+3t^{-10}   \right)
\\
\nonumber
&\leq
C(\log t)^8  t^{3(1-p)(1-\alpha)}
\\
\nonumber
&=
C(\log t)^8  t^{3\alpha}.
\end{align}
By an elementary Bernstein bound, one can prove that the variable~$T_n$ is concentrated (with one minus exponentially small probability) around~$n p^{-1}$. Therefore, we have
\begin{equation}\label{eq:exptaub}
\sum_{s=1}^{t} \frac{\Pd(T_n=s)}{s^{\frac{p}{2}}}\leq \Ed\left[T_n^{-\frac{p}{2}}\right]\leq c n^{-\frac{p}{2}}.
\end{equation}
Again, using the bounds derived in \eqref{eq:ghefdom} and the integral bound, we can then obtain
\begin{align}
\nonumber
\Ed[\mathcal{T}_2^1(G_t)]
&\leq
\Ed\left[        \sum_{i=1}^{t}\sum_{j=\frac{t^{3\alpha}}{i}}^{t}\sum_{k=\frac{t^{3\alpha}}{ij}}^{t}    \sum_{s_1=i}^{t}    \sum_{s_2=j}^{t} \sum_{s_3=k}^{t}
e^{i,j}_{s_1}g^{j,k}_{s_2}g^{i,k}_{s_3} \right]
\\ \label{eq:t21bound}
&\quad +
\Ed\left[        \sum_{i=1}^{t}\sum_{j=\frac{t^{3\alpha}}{i}}^{t}\sum_{k=\frac{t^{3\alpha}}{ij}}^{t}    \sum_{s_1=i}^{t}    \sum_{s_2=j}^{t} \sum_{s_3=k}^{t}
g^{i,j}_{s_1}e^{j,k}_{s_2}g^{i,k}_{s_3} \right]
\\ \nonumber
&\quad +
\Ed\left[        \sum_{i=1}^{t}\sum_{j=\frac{t^{3\alpha}}{i}}^{t}\sum_{k=\frac{t^{3\alpha}}{ij}}^{t}    \sum_{s_1=i}^{t}    \sum_{s_2=j}^{t} \sum_{s_3=k}^{t}
g^{i,j}_{s_1}g^{j,k}_{s_2}e^{i,k}_{s_3} \right]
\\ \nonumber
&\stackrel{\eqref{eq:exptaub}}{\leq}
C t^{2(1-p)} (\log t)^5 \left(
\sum_{i=1}^{t}\sum_{j=\frac{t^{3\alpha}}{i}}^{t}\sum_{k=\frac{t^{3\alpha}}{ij}}^{t}   \frac{2}{(ij)^{2-p}k} +\frac{1}{(ik)^{2-p}j}  \right)
+ 3t^6 \Pd(  \tilde{\eta} \leq t  )
\\ \nonumber
&\leq C t^{2(1-p)} (\log t)^5
\left(
(\log t)^2t^{-3\alpha(1-p)}   +\log t\cdot t^{1-p}\cdot  t^{-3\alpha (1-p)}
\right)
\\ \nonumber
&\leq C(\log t)^7 t^{3\alpha}.
\end{align}
We conclude by estimating the expectation of~$\mathcal{T}_2^2(G_t)$ in a similar manner as above:
\begin{align}
\nonumber
\Ed[\mathcal{T}_2^2(G_t)]
&\leq
\Ed\left[        \sum_{i=1}^{t}\sum_{j=\frac{t^{3\alpha}}{i}}^{t}\sum_{k=\frac{t^{3\alpha}}{ij}}^{t}    \sum_{s_1=i}^{t}    \sum_{s_2=j}^{t} \sum_{s_3=k}^{t}
e^{i,j}_{s_1}g^{j,k}_{s_2}e^{i,k}_{s_3} \right]
\\ \label{eq:t22bound}
&\quad +
\Ed\left[        \sum_{i=1}^{t}\sum_{j=\frac{t^{3\alpha}}{i}}^{t}\sum_{k=\frac{t^{3\alpha}}{ij}}^{t}    \sum_{s_1=i}^{t}    \sum_{s_2=j}^{t} \sum_{s_3=k}^{t}
e^{i,j}_{s_1}e^{j,k}_{s_2}g^{i,k}_{s_3} \right]
\\ \nonumber
&\leq
\sum_{i=1}^{t}\sum_{j=\frac{t^{3\alpha}}{i}}^{t}\sum_{k=\frac{t^{3\alpha}}{ij}}^{t}    \sum_{s_1=i}^{t}    \sum_{s_2=j}^{t} \sum_{s_3=k}^{t}
C_p^4 (\log t)^4 \frac{1}{(ik)^{c_p}s_2^{p}}\frac{\Pd(T_j=s_1)}{i^{c_p} s_1^{p/2}}\frac{\Pd(T_k=s_3)}{j^{c_p} s_3^{p/2}}
\\ \nonumber
&\quad +
\sum_{i=1}^{t}\sum_{j=\frac{t^{3\alpha}}{i}}^{t}\sum_{k=\frac{t^{3\alpha}}{ij}}^{t}    \sum_{s_1=i}^{t}    \sum_{s_2=j}^{t} \sum_{s_3=k}^{t}
C_p^4 (\log t)^4 \frac{1}{(jk)^{c_p}s_3^{p}}\frac{\Pd(T_j=s_1)}{i^{c_p} s_1^{p/2}}\frac{\Pd(T_k=s_2)}{i^{c_p} s_2^{p/2}}
\\ \nonumber
&\quad + 3t^6 \Pd(  \tilde{\eta} \leq t  )
\\ \nonumber
&\stackrel{\eqref{eq:exptaub}}{\leq}
C t^{1-p} (\log t)^4 \left(
\sum_{i=1}^{t}\sum_{j=\frac{t^{3\alpha}}{i}}^{t}\sum_{k=\frac{t^{3\alpha}}{ij}}^{t}   \frac{2}{i^{2-p}jk} \right)
+ 3t^6 \Pd(  \tilde{\eta} \leq t  )
\\ \nonumber
&\leq C t^{1-p} (\log t)^6,
\end{align}
which finally implies
\[
\Ed[\mathcal{T}(G_t)]\leq C( \log t)^8 t^{3\alpha},
\]
concluding the proof of Proposition~$\ref{prop:exptriangle}$.
\end{proof}
\section{Proof of the main results}\label{sec:mainresults}
In this part we wrap up all the results we have proven so far to prove our two main results: Theorem~\ref{thm:clustering} and Theorem \ref{teo:clique}.
\begin{proof}[Proof of Theorem \ref{thm:clustering} - Clustering Coefficient] First recall that $\tau(G_t)$ is defined as three times $\mathcal{T}(G_t)/\mathcal{C}(G_t)$.
Then, by Theorem \ref{teo:cerejas}, we have that, for any $\varepsilon < 1$, 
\[
 C_1t^{(2-p)(1-\varepsilon)} \le \mathcal{C}(G_t) \le C_2t^{(2-p)}\log^2t,
\]
for some constants $C_1$ and $C_2$, with probability $1-o(1)$. For $\mathcal{T}(G_t)$, we evoke Theorem 1 of \cite{ARS17}, which states that there exists, with probability~$1-o(1)$, a clique of order $t^{\alpha(1-\varepsilon)}$ in~$G_t$. Thus, with probability~$1-o(1)$,~$\mathcal{T}(G_t)$ is at least $t^{3\alpha(1-\varepsilon)}$. And finally, by Proposition~\ref{prop:exptriangle} and Markov's inequality, we have that~$\mathcal{T}(G_t)$ is at most $t^{3\alpha}\log^9t$, with probability~$1-o(1)$. This proves the theorem.

\end{proof}
\begin{proof}[Proof of Theorem \ref{teo:clique} - Clique Number] The existence of a clique of order $t^{\frac{(1-\e)(1-p)}{2-p}}$ in $G_t$, \textit{w.h.p}, was proved by \cite{ARS17} in Theorem 1. For the upper bound, observe that the existence of a complete subgraph of order~$C^{1/3}t^{\frac{(1-p)}{2-p}}\log^3(t)$ in $G_t$ implies immediately that $\mathcal{T}(G_t)$ is at least $C(\log t)^9t^{3\alpha}$, which by Proposition~\ref{prop:exptriangle} and Markov's inequality occurs with probability at most $1/\log t$. This proves the theorem.
\end{proof}
\bibliography{ref}
\bibliographystyle{plain}
\end{document}